\newcommand{\A}{\mathcal{A}}
\newcommand{\C}{\mathscr{C}}
\newcommand{\Sf}{\mathcal{S}}
\newtheorem{thm}{Theorem}[section]
\newtheorem{prop}[thm]{Proposition}
\newtheorem{cor}[thm]{Corollary}
\newtheorem{obs}[thm]{Observation}
\newtheorem{claim}[thm]{Claim}
\newtheorem{defn}[thm]{Definition}
\numberwithin{equation}{section}
\begin{document}

\title[Vizing's conjecture]{Vizing-type bounds for graphs with induced subgraph restrictions}

\author{Elliot Krop}
\address{Elliot Krop (\tt elliotkrop@clayton.edu)}
\author{Pritul Patel}
\address{Pritul Patel (\tt pritulpatel@clayton.edu)}
\address{Department of Mathematics, Clayton State University}
\author{Gaspar Porta}
\address{Gaspar Porta (\tt gaspar.porta@washburn.edu)}
\address{Department of Mathematics and Statistics, Washburn University}
\date{\today}

\begin {abstract}
For any graphs $G$ and $H$, we say that a bound is of Vizing-type if $\gamma(G\square H)\geq c \gamma(G)\gamma(H)$ for some constant $c$. We show several bounds of Vizing-type for graphs $G$ with forbidden induced subgraphs. In particular, if $G$ is a triangle and $K_{1,r}$-free graph, then for any graph $H$, $\gamma(G\square H)\geq \frac{r}{2r-1}\gamma(G)\gamma(H)$. If $G$ is a $K_r$ and $P_5$-free graph for some integer $r\geq 2$, then for any graph $H$, $\gamma(G\square H)\geq \frac{r-1}{2r-3}\gamma(G)\gamma(H)$. We do this by bounding the power of $G$, $\pi(G)$. We show that if $G$ is claw-free and $P_6$-free or $K_4$ and $P_5$-free, then for any graph $H$, $\gamma(G\square H)\geq \gamma(G)\gamma(H)$. Furthermore, we show Vizing-type bounds in terms of the diameter of $G$.
\\[\baselineskip] 2010 Mathematics Subject
      Classification: 05C69
\\[\baselineskip]
      Keywords: Domination number, Cartesian product of graphs, Vizing's conjecture, power of a graph
\end {abstract}

\maketitle

 \section{Introduction}
Vizing's conjecture \cite{Vizing}, now open for fifty-four years, states that for any two graphs $G$ and $H$,
\begin{align}
\gamma(G \square H) \geq \gamma(G)\gamma(H)\label{V}
\end{align}
where $\gamma(G)$ is the domination number of $G$.

The survey \cite{BDGHHKR} discusses many results and approaches to the problem. For more recent partial results see \cite{ST}, \cite{PPS}, \cite{B}, \cite{CK}, \cite{K}, and \cite{K2}.

A predominant approach to the conjecture has been to show it true for some large class of graphs. For example, in their seminal result, Bartsalkin and German \cite{BG} showed the conjecture for decomposable graphs. More recently, Aharoni and Szab\'{o} \cite{AS} showed the conjecture for chordal graphs and Bre\v{s}ar \cite{B} gave a new proof of the conjecture for graphs $G$ with domination number $3$. 

We say that a bound is of Vizing-type if $\gamma(G\square H)\geq c \gamma(G)\gamma(H)$ for some constant $c$, which may depend on $G$ or $H$. It is known \cite{ST} that all graphs satisfy the Vizing-type bound, 
\begin{align}
\gamma(G \square H) \geq \frac{1}{2}\gamma(G)\gamma(H)+\frac{1}{2}\min\{\gamma(G),\gamma(H)\}.\label{suentarr}
\end{align}

Restricting the graphs, but as a generalization of Bartsalkin and German's class of decomposable graphs, Contractor and Krop \cite{CK} showed
\begin{align*}
\gamma(G\square H)\geq \left(\gamma(G)-\sqrt{\gamma(G)}\right)\gamma(H)
\end{align*}

where $G$ belongs to $\A_1$, the class of graphs which are spanning subgraphs of domination critical graphs $G'$, so that $G$ and $G'$ have the same domination number and the clique partition number of $G'$ is one more than its domination number.

\medskip

To describe another Vizing-type bound \cite{K2} define the \emph{power} of a graph $\pi(G)$ as follows:

\begin{defn}
For a fixed $\gamma$-set $D$ of $G$, the \emph{allegiance} of $D$ with respect to $G$, $a_G(D)=\max_{v\in V(G)}\{|D\cap N[v]\}$.
\end{defn}

\begin{defn}
The \emph{power} of a graph $G$, $\pi(G)=\min_{D}\{a_G(D)\}$ taken over all $\gamma$-sets $D$ of $G$.
\end{defn}

The author then showed the Vizing-type bound for any graphs $G$ and $H$, 
\begin{align}
\gamma(G\square H)\geq \frac{\pi(G)}{2\pi(G)-1}\gamma(G)\gamma(H).\label{bound}
\end{align}

By the above inequality, one can produce improved Vizing-type bounds on classes of graphs by finding the maximum power of those classes.

In this paper we consider Vizing-type bounds for classes of graphs which do not contain one or more induced subgraphs. Some of our arguments are simple or direct applications of previous results such as formula \eqref{bound}, while others require more work.

We show that if $G$ is a triangle and $K_{1,r}$-free graph, then $\pi(G)\leq r$ which by \eqref{bound} implies that for any graph $H$, $\gamma(G\square H)\geq \frac{r}{2r-1}\gamma(G)\gamma(H)$. If $G$ is a $K_r$ and $P_5$-free graph for some integer $r\geq 2$, then $\pi(G)\leq r-1$ similarly implying that for any graph $H$, $\gamma(G\square H)\geq \frac{r-1}{2r-3}\gamma(G)\gamma(H)$. We show that if $G$ is $K_4$ and $P_5$-free or if $G$ is claw-free and $P_6$-free, then Vizing's conjecture holds for $G$. Furthermore, we show Vizing-type bounds in terms of the diameter of $G$.

\subsection{Basic notation}

All graphs $G(V,E)$ are finite, simple, connected, and undirected with vertex set $V$ and edge set $E$. We may refer to the vertex set and edge set of $G$ as $V(G)$ and $E(G)$, respectively.  For more on basic graph theoretic notation and definitions we refer to Diestel~\cite{Diest}. 
 
For any graph $H$, we say a graph $G$ is \emph{H-free} if $G$ contains no induced subgraphs isomorphic to $H$. A \emph{claw} is the graph $K_{1,3}$.
 
For any graph $G=(V,E)$, a subset $S\subseteq V$ \emph{dominates} $G$ if $N[S]=G$. The minimum cardinality of $S \subseteq V$, so that $S$ dominates $G$ is called the \emph{domination number} of $G$ and is denoted $\gamma(G)$. We call a dominating set that realizes the domination number a $\gamma$-set.

An \emph{independent dominating set} of a graph $G$ is a set of independent (pairwise mutually non-adjacent) vertices which dominate $G$. The size of a smallest independent dominating set of $G$ is denoted by $i(G)$.

 The \emph{Cartesian product} of two graphs $G_1(V_1,E_1)$ and $G_2(V_2,E_2)$, denoted by $G_1 \square G_2$, is a graph with vertex set $V_1 \times V_2$ and edge set $E(G_1 \square G_2) = \{((u_1,v_1),(u_2,v_2)) : v_1=v_2 \mbox{ and } (u_1,u_2) \in E_1, \mbox{ or } u_1 = u_2 \mbox{ and } (v_1,v_2) \in E_2\}$.

\section{Observations for graphs with forbidden induced subgraphs}

We will utilize the following result.

\begin{thm}[Bacs\'o and Tuza \cite{BT}]\label{BacsoTuza}
If a connected graph $G$ is $P_5$-free, then $G$ has a dominating set that induces a clique or $P_3$.
\end{thm}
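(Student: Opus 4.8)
The statement is a dichotomy, so the plan is to assume that $G$ has no dominating clique and, under that assumption, to exhibit a dominating $P_3$. The single structural fact I would extract from $P_5$-freeness at the outset is that a shortest path between any two vertices is induced, so no induced path has more than four vertices; in particular $G$ has diameter at most $3$. I will apply this repeatedly in the contrapositive: whenever five vertices are arranged along a path whose ``missing'' edges are forced to be non-edges, at least one of the remaining pairs must be adjacent.

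First I would choose, among all cliques of $G$, a clique $C$ whose closed neighborhood $N[C]$ is as large as possible, and, subject to that, with $C$ inclusion-maximal. If $N[C]=V(G)$ then $C$ is a dominating clique, contrary to assumption; so fix a vertex $w\notin N[C]$ at minimum distance from $C$, and note that $w$ is at distance $2$ or $3$ from $C$. Take a shortest path from $C$ to $w$, say $c=u_0,u_1,\dots,u_k=w$ with $k\in\{2,3\}$, which is induced. When $k=2$ the three vertices $c,u_1,w$ already induce a $P_3$, and the goal becomes showing that this particular $P_3$ dominates all of $G$: any vertex escaping it, combined with $c$, $u_1$, $w$ and a neighbor in $C$, should be forced into an induced $P_5$, contradicting $P_5$-freeness. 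When $k=3$ the path is an induced $P_4$; here I would confront it with a second vertex $c'\in C$ and use $P_5$-freeness (the pairs $c'u_1, c'u_2, c'w$ cannot all be non-edges) together with the distance and maximality conditions to rule this case out or to relocate a dominating $P_3$.

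The main obstacle is precisely this case analysis. Two things must be nailed down: that the three chosen vertices genuinely induce a $P_3$ (they cannot form a triangle, as that would enlarge the coverage of $C$ and contradict its extremal choice), and that they dominate every remaining vertex. The hardest point is controlling the vertices at distance $3$ from $C$ and ruling out a second, incompatible ``branch'' of undominated vertices: such a branch would either create an induced $P_5$ outright or contradict the maximality of $|N[C]|$. The diameter bound is what keeps the argument finite, since it caps the number of distance layers and the length of the paths that $P_5$-freeness is called upon to destroy.
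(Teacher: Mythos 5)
First, a point of order: the paper does not prove this statement at all---it is imported as a known theorem of Bacs\'o and Tuza \cite{BT}---so there is no ``paper proof'' to match, and your attempt has to stand on its own. It does not, because its central step is asserted rather than proved, and the assertion fails as stated. In your case $k=2$, you claim that a vertex $x$ escaping domination by $\{c,u_1,w\}$, ``combined with $c$, $u_1$, $w$ and a neighbor in $C$,'' is forced into an induced $P_5$. Write the five vertices down: $x, c', c, u_1, w$ with $c'\in C$ a neighbor of $x$. The pairs $xc$, $xu_1$, $xw$ (since $x$ escapes), and $cw$, $c'w$ (since $w\notin N[C]$) are indeed non-edges, but nothing whatsoever forbids the chord $c'u_1$; if $c'u_1\in E(G)$, these five vertices span no induced $P_5$ (only an induced $P_4$ such as $x\,c'\,u_1\,w$, which is perfectly legal in a $P_5$-free graph), and your contradiction evaporates. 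Worse, an escaping vertex need not lie in $N[C]$ at all: it can sit at distance $2$ or $3$ from $C$, adjacent to neither $u_1$ nor $w$, and then no configuration of the kind you describe even exists. Handling precisely those vertices is the substance of the theorem, and your outline only expresses the hope that the extremal choice of $C$ ``should'' dispose of them---you yourself list these steps as ``to be nailed down.''

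Second, your case split misfires. If $G$ is connected and $N[C]\neq V(G)$, a vertex outside $N[C]$ chosen at minimum distance from $C$ is always at distance exactly $2$: if $z\notin N[C]$ has $d(z,C)=d\geq 3$, the vertex two steps from $C$ on a shortest $z$--$C$ path is again outside $N[C]$ (otherwise $d(z,C)<d$) and at distance $2$. So your case $k=3$ is vacuous, while the genuinely hard objects---distance-$3$ vertices that escape the chosen $P_3$---reappear inside case $k=2$, where you have no tool for them. (Also, the remark that $c,u_1,w$ ``cannot form a triangle'' because of the extremal choice of $C$ is beside the point: $cw\notin E(G)$ simply because $w\notin N[C]$.) The extremal device of a maximal clique maximizing $|N[C]|$ is a reasonable opening, close in spirit to Cozzens--Kelleher-type arguments for dominating cliques, but the theorem you are proving is a substantial published result whose known proofs (Bacs\'o--Tuza's original induction, or the route through minimum connected dominating sets) do real structural work exactly where your sketch waves its hands. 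As it stands, this is a plan for a proof, not a proof.
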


\begin{prop}\label{triangle and star}
If $G$ is triangle and $K_{1,r}$-free, for any integer $r>1$, then $\pi(G)\leq r$.
\end{prop}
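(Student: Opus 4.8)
The plan is to reduce the statement to a maximum-degree bound on $G$ and then observe that the allegiance of \emph{any} $\gamma$-set is automatically controlled by the maximum degree. The key structural point is that the two forbidden subgraphs interact very rigidly at the level of a single neighborhood: triangle-freeness forces each open neighborhood $N(v)$ to be an independent set, since any edge inside $N(v)$ would close up a triangle with $v$. Once $N(v)$ is known to be independent, the $K_{1,r}$-freeness hypothesis becomes a direct cardinality bound rather than an independence condition.

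First I would prove the claim that $\Delta(G)\le r-1$. Fix a vertex $v$. By triangle-freeness, $N(v)$ induces an edgeless graph, so its vertices are pairwise non-adjacent. If we had $|N(v)|\ge r$, then $v$ together with any $r$ of its neighbors would induce a copy of $K_{1,r}$, because those neighbors are mutually non-adjacent; this contradicts $K_{1,r}$-freeness. Hence $|N(v)|\le r-1$, and since $v$ was arbitrary, $\Delta(G)\le r-1$.

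The second and final step is to translate this degree bound into an allegiance bound. Let $D$ be any $\gamma$-set of $G$ and let $v\in V(G)$ be arbitrary. Then
\begin{align*}
|D\cap N[v]|\le |N[v]| = 1+\deg(v)\le 1+(r-1)=r.
\end{align*}
Taking the maximum over $v$ gives $a_G(D)\le r$, and since this holds for every $\gamma$-set, in particular $\pi(G)=\min_D a_G(D)\le r$, as desired.

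There is no real obstacle here: the entire content is the observation that triangle-freeness converts the $K_{1,r}$-free condition into the degree bound $\Delta(G)\le r-1$, after which the allegiance estimate is immediate and uniform over all $\gamma$-sets. I would only need to take mild care that the edge case $r>1$ keeps the bound $r-1\ge 1$ meaningful and that the argument does not secretly require any minimality or optimization over the choice of $D$, which it does not.
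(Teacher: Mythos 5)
Your proof is correct and rests on exactly the same observation as the paper's: triangle-freeness makes each neighborhood $N(v)$ independent, and then $K_{1,r}$-freeness caps its size, giving $|D\cap N[v]|\le r$. The only cosmetic difference is that you route through the global degree bound $\Delta(G)\le r-1$, whereas the paper applies the same reasoning directly to the vertices of $D$ inside $N(v)$; the substance is identical.
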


\begin{proof}
If $D$ is a minimum dominating set of $G$, and $u$ is any vertex in $V(G)$, then notice that since there are no triangles, $u$ can only be adjacent to independent vertices. Furthermore, since $G$ is $K_{1,r}$-free, $u$ can be adjacent to no more than $r-1$ independent vertices of $D$. However, if $u\in D$ and $u$ is adjacent to $r-1$ other vertices in $D$, then $a_G(D)=r$ and $\pi(G)\leq r$. 

\end{proof}

The following argument is due to Douglas Rall.

\begin{prop}\label{cograph}
If $G$ is $P_4$-free, also known as a cograph, then for any graph $H$, $\gamma(G\square H)\geq \gamma(G)\gamma(H)$.
\end{prop}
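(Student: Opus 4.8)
The plan is to exploit the structure theorem for cographs: a connected $P_4$-free graph on at least two vertices is the join $G_1 + G_2$ of two smaller cographs, and consequently has domination number at most $2$. Indeed, choosing any $u\in V(G_1)$ and $w\in V(G_2)$, the join edges make $u$ dominate all of $V(G_2)$ and $w$ dominate all of $V(G_1)$, so $\{u,w\}$ dominates $G$. Thus (using that $G$ is connected) it suffices to verify the bound in the two cases $\gamma(G)=1$ and $\gamma(G)=2$.

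The basic tool is a projection lemma. Given a dominating set $S$ of $G\square H$ and a set $W\subseteq V(G)$ that contains $N_G[w]$ for some $w\in W$, I claim the projection $p_H(S\cap(W\times V(H)))=\{h : (g,h)\in S \text{ for some } g\in W\}$ is a dominating set of $H$: for each $h$ the vertex $(w,h)$ is dominated by some $(g',h')\in S$, and either $g'=w$ (forcing $h'\in N_H[h]$) or $h'=h$ and $g'\in N_G[w]\subseteq W$; in both cases a member of the projection lies in $N_H[h]$. Since this projection has size at most $|S\cap(W\times V(H))|$, we get $\gamma(H)\le|S\cap(W\times V(H))|$. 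Taking $W=V(G)$ recovers $\gamma(G\square H)\ge\gamma(H)$, which settles the case $\gamma(G)=1$ at once.

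The case $\gamma(G)=2$ is the crux. Here I would fix a dominating pair $\{a,b\}$ and aim to split $V(G)$ into two groups of columns each projecting onto a dominating set of $H$, which would give $|S|\ge 2\gamma(H)=\gamma(G)\gamma(H)$. The projection lemma immediately shows that the columns in $N[a]$, and separately those in $N[b]$, each carry a dominating set of $H$; the trouble is that a cograph has diameter at most $2$, so $N[a]\cap N[b]\ne\emptyset$ and one cannot take the two groups to be disjoint. Simply adding the two bounds loses the contribution of the shared columns and yields only $|S|\ge 2\gamma(H)-|S\cap((N[a]\cap N[b])\times V(H))|$. Recovering this overlap is exactly the content of the classical theorem that Vizing's conjecture holds for every graph with $\gamma(G)\le 2$, which I would invoke to finish. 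Making the splitting work directly—by replacing $N[b]$ with the truncated region $V(G)\setminus N[a]$ and charging the rows on which its projection fails to otherwise-uncounted elements of $S$ that are forced by having to dominate the fibers over $V(G)\setminus N[a]$—is possible but is where all the real work lies, and the unavoidable overlap $N[a]\cap N[b]$ created by the diameter-two property of cographs is the main obstacle.
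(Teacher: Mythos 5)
Your proposal is correct and follows essentially the same route as the paper: both use the cograph structure theorem to conclude that a connected $P_4$-free graph is a join and hence has $\gamma(G)\leq 2$, and both then finish by invoking the known fact that Vizing's conjecture holds for graphs with domination number at most $2$. Your self-contained projection argument for the $\gamma(G)=1$ case and your discussion of the obstacles to a direct proof when $\gamma(G)=2$ are fine but inessential, since the citation of the classical $\gamma(G)\leq 2$ result is what carries the proof, exactly as in the paper.
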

\begin{proof}
Any cograph may be constructed from $K_1$ by a sequence of disjoint union and join operations \cite{Br}. Since $G$ is connected, the last operation in its constuction must have been a join, which implies that $\gamma(G)$ is either $1$ or $2$. In either case, Vizing's conjecture holds \cite{BDGHHKR}.
\end{proof}

In the following proposition, the first argument is due to Douglas Rall.

\begin{prop}\label{clique and P5}\leavevmode
\begin{enumerate}
\item If $G$ is $K_4$ and $P_5$-free, then $\gamma(G\square H)\geq \gamma(G)\gamma(H)$.
\item If $G$ is $K_r$ and $P_5$-free, for any integer $r>4$, then $\pi(G)\leq r-1$.
\end{enumerate}
\end{prop}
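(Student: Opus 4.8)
The plan is to reduce both statements to the structural dichotomy supplied by Bacs\'o and Tuza (Theorem \ref{BacsoTuza}): every connected $P_5$-free graph has a dominating set inducing either a clique or a $P_3$. Since our graphs are connected by assumption, this theorem applies, and the forbidden-clique hypotheses will cap the size of such a dominating set, hence cap $\gamma(G)$ itself. Once $\gamma(G)$ is forced to be small, both conclusions follow almost immediately, so the real content is the observation that these two induced-subgraph restrictions together produce a genuinely small dominating set.

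For part (1), let $S$ be a dominating set of $G$ inducing a clique or a $P_3$. If $S$ induces a clique, then $S$ is itself a clique of $G$, so $K_4$-freeness gives $|S|\leq 3$; if $S$ induces a $P_3$, then $|S|=3$. Either way $\gamma(G)\leq 3$. Vizing's conjecture is already known for every graph with domination number at most $3$ (the cases $\gamma(G)\leq 2$ are classical \cite{BDGHHKR}, and $\gamma(G)=3$ is Bre\v{s}ar's theorem \cite{B}), so $\gamma(G\square H)\geq \gamma(G)\gamma(H)$ follows at once.

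For part (2), I run the same dichotomy. If $S$ induces a clique then $K_r$-freeness gives $|S|\leq r-1$; if $S$ induces a $P_3$ then $|S|=3\leq r-1$ because $r>4$. Hence $\gamma(G)\leq r-1$. It remains to convert this into a bound on the power: for any $\gamma$-set $D$ and any vertex $v$ we have $D\cap N[v]\subseteq D$, so $|D\cap N[v]|\leq |D|=\gamma(G)\leq r-1$; taking the maximum over $v$ gives $a_G(D)\leq r-1$, and taking the minimum over $\gamma$-sets gives $\pi(G)\leq r-1$. Feeding this into \eqref{bound} then yields the advertised Vizing-type bound for any $H$.

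The arguments are short, so the main obstacle is conceptual rather than computational: recognizing that the paired forbidden subgraphs force not just local sparseness but a small dominating \emph{clique} (or $P_3$) through Bacs\'o--Tuza, and then that $\pi(G)\leq\gamma(G)$ trivially for a small $\gamma$-set. The one point that needs care is the threshold $r>4$: the $P_3$ branch of the dichotomy always contributes a dominating set of size $3$, so the clean bound $\gamma(G)\leq r-1$ requires $r-1\geq 3$. This is precisely why the case $r=4$ is peeled off into part (1), where the stronger conclusion (full Vizing rather than merely $\pi(G)\leq 3$) is available.
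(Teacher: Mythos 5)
Your proof is correct and follows the paper's own route: both parts rest on the Bacs\'o--Tuza dichotomy (Theorem \ref{BacsoTuza}), with part (1) reduced to the known cases $\gamma(G)\leq 3$ of Vizing's conjecture exactly as in the paper. In part (2) your argument is in fact slightly cleaner than the paper's: where the paper peels off the case of a minimum dominating clique of size exactly $r-1$ and computes its allegiance directly, you simply note that $a_G(D)\leq |D|=\gamma(G)$ for every $\gamma$-set $D$, so $\pi(G)\leq \gamma(G)\leq r-1$ follows uniformly in all cases, including the $P_3$ case (where the paper only remarks that Vizing's conjecture holds rather than explicitly checking $\pi(G)\leq r-1$).
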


\begin{proof}
Suppose $G$ is $K_4$ and $P_5$-free. By Theorem \ref{BacsoTuza}, $G$ has a dominating set that induces a clique or $P_3$.  Since $G$ is $K_4$-free, any dominating clique would have order at most $3$.  Therefore, $G$ must have domination number at most 3.  Since Vizing's conjecture is known for graphs with domination number one, two \cite{BDGHHKR}, and three \cite{B}, it follows that a graph $G$ that is $K_3$-free and $P_5$-free satisfies Vizing’s Conjecture.

If $G$ is $K_r$ and $P_5$-free for $r>4$, then again by Theorem \ref{BacsoTuza}, $G$ has a minimum dominating set which either induces $P_3$ or a clique. In the first case, $\gamma(G)=3$ and hence $G$ satisfies Vizing's conjecture \cite{B}. If $G$ has a minimum dominating set of size less than $r-1$, then the $\pi(G)< r-1$. Thus, we may assume that $G$ has a minimum dominating set $\Gamma$ which is a clique of size $r-1$. Notice that since $G$ is $K_r$-free, any vertex in $\Gamma$ has $r-2$ neighbors in $\Gamma$ and any vertex not in $\Gamma$ has at most $r-2$ neighbors in $\Gamma$.
\end{proof}

\begin{cor}\leavevmode
For any integer $r>1$, 
\begin{enumerate}
\item If $G$ is triangle and $K_{1,r}$-free, then for any graph $H$, $\gamma(G\square H)\geq \frac{r}{2r-1}\gamma(G)\gamma(H)$.
\item If $G$ is $K_r$ and $P_5$-free, then for any graph $H$, $\gamma(G\square H)\geq \frac{r-1}{2r-3}\gamma(G)\gamma(H)$.
\end{enumerate}
\end{cor}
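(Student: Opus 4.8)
The plan is to read off both inequalities directly from the power estimates of Propositions \ref{triangle and star} and \ref{clique and P5} by feeding them into the Vizing-type bound \eqref{bound}. The one analytic observation I would record at the outset is that the coefficient function $f(x)=\frac{x}{2x-1}$ is strictly decreasing on $x>\tfrac12$, since $f'(x)=-\frac{1}{(2x-1)^2}<0$. Thus an \emph{upper} bound on $\pi(G)$ produces a \emph{lower} bound on the factor $\frac{\pi(G)}{2\pi(G)-1}$ multiplying $\gamma(G)\gamma(H)$ in \eqref{bound}, which is exactly the monotonicity I need.

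For part (1), I would simply combine Proposition \ref{triangle and star}, which gives $\pi(G)\le r$ for triangle and $K_{1,r}$-free $G$, with \eqref{bound} and the monotonicity of $f$:
\[
\gamma(G\square H)\ge \frac{\pi(G)}{2\pi(G)-1}\gamma(G)\gamma(H)\ge \frac{r}{2r-1}\gamma(G)\gamma(H).
\]

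For part (2), I would argue by cases on $r$. First I would note that $\frac{r-1}{2r-3}\le 1$ for every $r\ge 2$ (which is equivalent to $r\ge 2$), so the desired bound follows automatically from Vizing's conjecture itself whenever the latter is known. The small cases $r\in\{2,3,4\}$ fall into this situation: $r=2$ forces $G=K_1$, while the $K_4$-free (hence also $K_3$-free) and $P_5$-free instances are exactly those shown to satisfy Vizing's conjecture in Proposition \ref{clique and P5}(1) and its proof. For the remaining case $r>4$, Proposition \ref{clique and P5}(2) gives $\pi(G)\le r-1$, and applying $f$ and \eqref{bound} yields
\[
\gamma(G\square H)\ge \frac{\pi(G)}{2\pi(G)-1}\gamma(G)\gamma(H)\ge \frac{r-1}{2(r-1)-1}\gamma(G)\gamma(H)=\frac{r-1}{2r-3}\gamma(G)\gamma(H).
\]

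I expect no serious obstacle, as every step is either a monotonicity observation or a substitution into \eqref{bound}. The only point demanding attention is the boundary bookkeeping in part (2): since Proposition \ref{clique and P5}(2) supplies a power bound only for $r>4$, the values $r=2,3,4$ must be dispatched separately through the Vizing-conjecture conclusions of Proposition \ref{clique and P5}(1) rather than through \eqref{bound}, and one must verify $\frac{r-1}{2r-3}\le 1$ so that those conclusions indeed imply the stated (weaker) bound.
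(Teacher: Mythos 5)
Your proposal is correct and takes essentially the same route as the paper: the paper's own proof is a one-line ``immediate application of \eqref{bound} to Proposition \ref{triangle and star} and Proposition \ref{clique and P5}.'' Your write-up simply makes explicit the two details the paper leaves implicit --- the monotonicity of $x\mapsto\frac{x}{2x-1}$, which converts the upper bounds on $\pi(G)$ into the stated lower bounds, and the separate handling of $r\in\{2,3,4\}$ in part (2) via Proposition \ref{clique and P5}(1) together with the check that $\frac{r-1}{2r-3}\le 1$.
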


\begin{proof}
The proof is an immediate application of \eqref{bound} to Proposition \ref{triangle and star} and Proposition \ref{clique and P5}
\end{proof}

\section{Claw and $P_6$-free graphs}

If $\Gamma=\{v_1,\dots, v_k\}$ is a minimum dominating set of $G$, then for any $i\in [k]$, define the set of \emph{private neighbors} for $v_i$, $P_i=\big\{v\in V(G)-\Gamma: N(v)\cap \Gamma = \{v_i\}\big\}$. For $S\subseteq [k]$, $|S|\geq 2$, we define the \emph{shared neighbors} of $\{v_i:i\in S\}$ as $P_S=\big\{v\in V(G)-\Gamma: N(v)\cap \Gamma=\{v_i: i\in S\}\big\}$.

For any $S\subseteq [k]$, say $S=\{i_1,\dots, i_s\}$ where $s\geq 2$, we may write $P_S$ as $P_{\{i_1,\dots, i_s\}}$ or $P_{i_1,\dots, i_s}$ interchangeably.

The following useful notation was introduced in \cite{K}.

For $i\in [k]$, let $Q_i=\{v_i\} \cup P_i$. We call $\mathcal{Q}=\{Q_1,\dots, Q_k\}$ the \emph{cells} of $G$. For any $I\subseteq [k]$, we write $Q_I=\bigcup_{i\in I}Q_i$ and call $\C(\cup_{i\in I}Q_i)=\bigcup_{i\in I}Q_i\cup\bigcup_{S\subseteq I}P_{S}$ the \emph{chamber} of $Q_I$. We may write this as $\C_{I}$.

For a vertex $h\in V(H)$, the \emph{$G$-fiber} of $h$, $G^h$, is the subgraph of $G\square H$ induced by $\{(g,h):g\in V(G)\}$. 

For a minimum dominating set $D$ of $G\square H$, we define $D^h=D\cap G^h$. Likewise, for any set $S\subseteq [k]$, $P_S^h=P_S \times \{h\}$, and for $i\in [k]$, $Q_i^h=Q_i\times \{h\}$. By $v_i^h$ we mean the vertex $(v_i,h)$. For any $I^h\subseteq [k]$, where $I^h$ represents the indices of some cells in $G$-fiber $G^h$, we write $\C_{I^h}$ to mean the chamber of $Q^h_{I^h}$, that is, the set $\bigcup_{i\in I^h}Q_i\cup\bigcup_{S\subseteq I^h}P^h_{S}$.

Any vertex $v\in V(G)\times V(H)$ is \emph{vertically dominated} if $(\{v\}\times N_H[h])\cap D \neq \emptyset$, namely, there exists a vertex $v\in Q_i$ and a vertex $(v,h')\in D$, such that $hh'\in E(H)$. Vertices that are not vertically dominated are called \emph{vertically undominated}.
For $i\in [k]$ and $h\in V(H)$, we say that the cell $Q_i^h$ is \emph{vertically dominated} if $(Q_i\times N_H[h])\cap D\neq\emptyset$. A cell which is not vertically dominated is \emph{vertically undominated}. Note that all vertices of a vertically undominated cell $Q_i^h$ are dominated by vertices $(u,h)\in D$.

\medskip

The following classical result forms the basis for our argument.

\begin{thm}[Allan and Laskar \cite{AL}]\label{AL}
If $G$ is claw-free, then $i(G)=\gamma(G)$.
\end{thm}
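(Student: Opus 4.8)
The plan is to prove the two inequalities $i(G)\ge\gamma(G)$ and $i(G)\le\gamma(G)$ separately. The first is immediate and holds for every graph, since an independent dominating set is in particular a dominating set, so the smallest one has size at least $\gamma(G)$. The whole content is therefore the reverse inequality $i(G)\le\gamma(G)$, for which it suffices to exhibit a $\gamma$-set of $G$ that happens to be independent. I would obtain such a set by an extremal argument: among all $\gamma$-sets of $G$, choose one, say $D$, minimizing the number of edges of the induced subgraph $G[D]$, and then show that claw-freeness forces $G[D]$ to be edgeless.

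So suppose for contradiction that $G[D]$ has an edge, and fix $v\in D$ having at least one neighbor $u\in D$. Because $D$ is a minimum, hence minimal, dominating set, $v$ has a private neighbor; since $v$ already has the $D$-neighbor $u$, this private neighbor cannot be $v$ itself, so the set $P$ of external private neighbors of $v$ (vertices $p\notin D$ with $N(p)\cap D=\{v\}$) is nonempty. The key step, and the place where the claw-free hypothesis does the real work, is to show that $P$ is a clique. Indeed, each $p\in P$ is adjacent to $v$ but not to $u$, its only neighbor in $D$ being $v$. Hence if two vertices $p_1,p_2\in P$ were nonadjacent, then $u,p_1,p_2$ would be three pairwise nonadjacent vertices in $N(v)$, and together with $v$ they would induce a claw $K_{1,3}$, contradicting that $G$ is claw-free.

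With $P$ known to be a clique, I would finish by an exchange argument. Pick any $x\in P$ and set $D'=(D\setminus\{v\})\cup\{x\}$. Then $|D'|=|D|=\gamma(G)$, and $D'$ still dominates $G$: deleting $v$ leaves undominated only the vertices of $P$ (the vertex $v$ itself is still dominated by $u$), and since $P$ is a clique the single vertex $x$ dominates all of $P$. Thus $D'$ is again a $\gamma$-set. Finally I would count edges: removing $v$ destroys the at least one edge joining $v$ to $D$, while the new vertex $x$ contributes none, because its only neighbor in $D$ was $v$, which is no longer present. Hence $G[D']$ has strictly fewer edges than $G[D]$, contradicting the choice of $D$. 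This contradiction shows $G[D]$ is edgeless, so $D$ is an independent $\gamma$-set and $i(G)\le\gamma(G)$.

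The main obstacle is the clique step: everything else is bookkeeping, but the whole argument hinges on recognizing that the private neighbors of $v$, together with an existing $D$-neighbor of $v$, all lie inside $N(v)$ and must avoid forming an independent triple. Once claw-freeness is invoked to make $P$ a clique, the swap that simultaneously preserves domination, preserves cardinality, and reduces the edge count is essentially forced.
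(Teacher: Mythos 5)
Your proof is correct: the paper states this result only as a citation to Allan and Laskar and gives no proof of its own, and your argument---choosing a $\gamma$-set $D$ minimizing the number of edges of $G[D]$, using claw-freeness (via the independent triple $u,p_1,p_2$ in $N(v)$) to show the external private neighbors of a non-isolated $v\in D$ form a clique, and then swapping $v$ for one such private neighbor to strictly decrease the edge count---is exactly the classical exchange proof of the Allan--Laskar theorem. All the delicate points (existence of an external private neighbor because $v$ already has a $D$-neighbor, preservation of domination and cardinality under the swap, and the strict drop in edges) are handled correctly.
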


The next fact follows from the definition of claw-free graphs.

\begin{obs}\label{private}
For any claw-free graph $G$ with minimum independent dominating set $\{v_1,\dots,v_k\}$, for any $S\subseteq [k]$ with $|S|\geq 3$, $|P_S|=0$.
\end{obs}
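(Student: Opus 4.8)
The plan is to derive the conclusion directly from the independence of the dominating set together with the claw-free hypothesis. The essential point is that $\Gamma=\{v_1,\dots,v_k\}$ is taken to be an \emph{independent} dominating set of minimum size; by Theorem \ref{AL} such a set exists and has cardinality $i(G)=\gamma(G)$, so it is a legitimate minimum dominating set and the notation $P_S$ applies to it. Because $\Gamma$ is independent, any collection of vertices chosen from $\Gamma$ is pairwise non-adjacent, and this is exactly the feature that forbids large shared-neighbor sets.

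Concretely, I would argue by contradiction. Suppose $|P_S|>0$ for some $S\subseteq[k]$ with $|S|\geq 3$, and fix a vertex $v\in P_S$. By the definition of $P_S$ we have $v\in V(G)-\Gamma$ and $N(v)\cap\Gamma=\{v_i:i\in S\}$, so $v$ is adjacent to $v_i$ for every $i\in S$. Choosing any three distinct indices $i,j,\ell\in S$, the vertices $v_i,v_j,v_\ell$ are all neighbors of $v$, and since $\Gamma$ is independent they are pairwise non-adjacent. Hence the subgraph of $G$ induced on $\{v,v_i,v_j,v_\ell\}$ is isomorphic to $K_{1,3}$ with center $v$, that is, a claw. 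This contradicts the assumption that $G$ is claw-free, so no such $v$ exists and $|P_S|=0$.

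There is essentially no serious obstacle here: once one records via Theorem \ref{AL} that the dominating set may be taken independent, the statement reduces to the single observation that a common neighbor of three mutually non-adjacent vertices is the center of a claw. The only point requiring a little care is the bookkeeping with the definition of $P_S$: one must use both that membership in $P_S$ forces $v\notin\Gamma$ and that the \emph{entire} neighborhood of $v$ within $\Gamma$ equals $\{v_i:i\in S\}$, so that the three selected dominators are genuinely adjacent to $v$ and the induced subgraph is precisely a claw rather than something with extra edges.
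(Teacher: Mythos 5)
Your proof is correct and is exactly the argument the paper intends: the paper states the observation ``follows from the definition of claw-free graphs'' without spelling it out, and your spelled-out version---a common neighbor of three pairwise non-adjacent vertices of the independent dominating set would be the center of an induced $K_{1,3}$---is precisely that omitted argument, including the careful use of $N(v)\cap\Gamma=\{v_i:i\in S\}$ to ensure the induced subgraph is a claw.
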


Our theorem is an adaptation of the argument \cite{K} for the Vizing-type bound for claw-free graphs $G$, $\gamma(G\square H)\geq \frac{2}{3}\gamma(G)\gamma(H)$.

\begin{thm}\label{claw-free}
If $G$ is a claw and $P_6$-free graph, then for any graph $H$, $\gamma(G \square H)\geq \gamma(G)\gamma(H)$.
\end{thm}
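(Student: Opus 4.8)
The plan is to combine the Allan--Laskar structure theorem with a fiberwise double-counting argument in the style of \cite{K}, and then use the $P_6$-free hypothesis to eliminate the deficiency that limits the claw-free bound to $\frac23$. First I would invoke Theorem \ref{AL} to fix a \emph{minimum independent} dominating set $\Gamma=\{v_1,\dots,v_k\}$ of $G$, so that $k=\gamma(G)$, and build the associated cells $Q_i=\{v_i\}\cup P_i$ together with the shared neighbors $P_S$. By Observation \ref{private}, the independence of $\Gamma$ and claw-freeness force $P_S=\emptyset$ whenever $|S|\ge 3$, so the only shared neighbors are the pairwise sets $P_{ij}$; in particular every vertex of $G$ lies in $N[v_i]$ for at most two indices $i$, which is exactly the statement $\pi(G)\le 2$ underlying the $\frac23$ bound coming from \eqref{bound}.

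Next, fix a minimum dominating set $D$ of $G\square H$; the goal is $|D|\ge k\,\gamma(H)$, which I would establish by writing $|D|=\sum_{h\in V(H)}|D^h|$ and analyzing the fibers. For each $h$ I would split the cells into those that are vertically dominated and those that are vertically undominated; every vertex of a vertically undominated cell $Q_i^h$ must be dominated horizontally by $D^h$ inside $G^h$. Running the Clark--Suen-type accounting of \cite{K} with $\pi(G)\le 2$ reproduces the factor $\frac23$ and pinpoints the source of the loss: it arises precisely when a single vertex of $D$ is charged with dominating cells of two indices at once --- either a horizontal dominator meeting $N[v_i]\cap N[v_j]$, or a chain of cells $i,j,l,\dots$ linked through their shared-neighbor sets $P_{ij},P_{jl},\dots$, which lets several cell-fiber tokens be paid for by fewer vertices of $D$.

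The improvement comes from showing that $P_6$-freeness forbids such chains. The key configuration is this: if $w\in P_{ij}$ and $w'\in P_{jl}$ are distinct shared neighbors linking three cells, then $v_i - w - v_j - w' - v_l$ is a path, and prepending a private neighbor $p_i\in P_i$ yields $p_i - v_i - w - v_j - w' - v_l$. The non-edges $v_i v_j$, $v_i v_l$, $v_j v_l$ come from independence of $\Gamma$, and $w \not\sim v_l$, $w'\not\sim v_i$, $p_i\not\sim v_j$, $p_i\not\sim v_l$ come from the definitions of $P_{ij},P_{jl},P_i$; after checking the remaining potential chords ($w w'$ and $p_i w$) this path is an induced $P_6$, a contradiction. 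Consequently the interaction structure that records when two cells share a neighbor or a common horizontal dominator contains no long chain of this type, so the charging chains responsible for the $\frac23$ deficit are broken. This is what lets me upgrade the lossy inequality into an injective (system-of-distinct-representatives) assignment of $D$-vertices to cell-fiber tokens with no double charging, yielding $|D|\ge k\,\gamma(H)=\gamma(G)\gamma(H)$.

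The main obstacle I anticipate is exactly this last structural step: reducing the double-counting loss to the absence of long induced paths, and confirming that the suspicious configurations really produce an induced $P_6$ (or a claw) rather than merely a walk with chords. The delicate cases are precisely $w\sim w'$ and $p_i\sim w$, where independence and claw-freeness alone do not forbid the chord; handling these --- and managing the private neighbors $P_i$ and the chambers $\C_I$ so that the paths witnessing the contradiction are genuinely induced --- is where the real work lies. The fiberwise bookkeeping, by contrast, is a routine adaptation of \eqref{bound} and the argument of \cite{K}.
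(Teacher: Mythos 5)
Your proposal assembles the right ingredients --- Allan--Laskar to get an independent $\gamma$-set, the cells $Q_i$ and shared neighbors $P_{i,j}$, Observation \ref{private}, and the recognition that the loss in the $\frac{2}{3}$ bound comes from vertices of $D$ charged to two cells at once --- but the step that is supposed to convert $P_6$-freeness into removal of that loss does not work as described, and you have flagged the failure point yourself. Your ``key configuration'' $p_i\,v_i\,w\,v_j\,w'\,v_l$ with $w\in P_{i,j}$, $w'\in P_{j,l}$ need not be induced: the chord $ww'$ is compatible with both hypotheses (at $w$ the triple $v_i,v_j,w'$ is not a claw because $v_jw'\in E$, and symmetrically at $w'$), so a claw- and $P_6$-free graph can perfectly well contain such a ``chain'' with the chord present. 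Since your plan is to show the interaction structure ``contains no long chain of this type,'' and such chains do exist, the reduction of the double-charging loss to forbidden chains collapses; deferring $w\sim w'$ and $p_i\sim w$ as ``where the real work lies'' is not a deferral of routine checking but the absence of the main idea. The ``injective (system-of-distinct-representatives) assignment'' that is supposed to finish the count is likewise never specified.

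The paper's proof finds its induced $P_6$ in a different place. It runs a three-stage relabeling of $D$ (reducing paired labels using vertically adjacent dominators and collisions within a fiber) so that if a paired label $(i,j)$ survives on a vertex $v\in P^h_{i,j}$, then $v$ is the unique vertex of $D$ in $P^h_{i,j}$ and no neighboring fiber can absorb its label. Then, instead of a second shared neighbor, it uses minimality of $\gamma(G)$: $v$ cannot dominate all of $Q_i^h\cup Q_j^h$, since otherwise $\left(\Gamma\setminus\{v_i,v_j\}\right)\cup\{v\}$ would be a dominating set of $G$ smaller than $\gamma(G)$; hence some private neighbor $p\in P_j$ is non-adjacent to $v$. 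The claw-free observations (culminating in Claim \ref{horizdom}) then force $p$ to be dominated horizontally by a private neighbor $q\in P_m$ of a vertically dominated cell. The resulting path $v_i\,v\,v_j\,p\,q\,v_m$ is genuinely induced, because every potential chord is excluded by privacy, independence, or claw-freeness ($v\not\sim p$ by choice; $v\not\sim q$ since a shared neighbor of $v_i,v_j$ adjacent to $q$ would create a claw; $p\not\sim v_m$, $q\not\sim v_j$, etc., because private neighbors attach to only one $v_i$) --- no unresolved chord cases remain. That combination --- the relabeling scheme, the minimality argument producing $p$, and Claim \ref{horizdom} producing $q$ --- is precisely what your proposal is missing, and it cannot be replaced by the chain-of-shared-neighbors argument you sketch.
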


\begin{proof}
Let $G$ be a claw and $P_6$-free graph and $H$ any graph. We apply Theorem \ref{AL} and define a minimum independent dominating set of $G$, $\Gamma=\{v_1,\dots, v_k\}$. Let $D$ be a minimum dominating set of $G\square H$. 

We define a series of labelings of the vertices of $D$ so that projection onto $H$ of those vertices with labels containing a fixed element produces a dominating set of $H$. In all instances, for any $i,j\in [k]$ and $h\in V(H)$, if $v\in P^h_{i,j}$, then $v$ may be labeled by singleton labels $i,j,$ or paired labels $(i,j)$. 

Our goal is to reduce the number of paired labels as much as possible.

For any $h\in V(H)$, suppose the fiber $G^h$ contains $\ell_h(=\ell)$ vertically undominated cells $U=\big\{Q_{i_1}^h,\dots, Q_{i_{\ell}}^h\big\}$ for some $0\leq \ell \leq k$. We set $I^h=\{i_1,\dots, i_{\ell}\}$. 

\medskip

\noindent\emph{Labeling 1}:

If a vertex of $D^h$ for any $h\in H$, is in $Q_i^h$ for $1\leq i \leq k$, then we label that vertex by $i$. If $v\in D^h$ is a shared neighbor of some subset of $\{v_i:i\in I^h\}$, then by Observation \ref{private}, it is a member of $P^h_{i,j}$ for some $i,j\in I^h$, and we label $v$ by the pair of labels $(i,j)$. If $v$ is a member of $D\cap P^h_{i,j}$ for $i\in I^h$ and $j\in [k]-I^h$, then we label $v$ by $i$. If $v$ is a member of $D\cap P^h_{i,j}$ for $i,j \in [k]-I^h$, then we label $v$ by either $i$ or $j$ arbitrarily. 

\medskip

After Labeling $1$, all vertices of $D$ have a singleton label or a paired label. Next we relabel the vertices of $D$, doing so in $D^h$ for every fixed $h\in H$. 

\medskip

\noindent\emph{Labeling 2}:

For a fixed $h\in H$, suppose $v$ is some shared neighbor of a subset of $\{v_i: i\in I^h\}$ in the chamber of $Q_{I^h}^h$, which is vertically dominated, say by $y\in D^{h'}$ for some $h'\in H,\,h\neq h'$. More precisely, suppose $v\in P^h_{j_1,j_2}$ for some $j_1,j_2\in I^h$ which implies that $y\in P^{h'}_{j_1,j_2}$. 

The vertex $y$ may be labeled by a singleton or or paired label, regardless of whether Labeling $2$ had been performed on $D^{h'}$.

Suppose that $y$ is labeled by a singleton label, say $j_1$. If $D^h$ contains a vertex $x\in P_{j_1,j_2}^h$, then we remove the paired label $(j_1,j_2)$ from $x$ and relabel $x$ by $j_2$. 

Suppose $y$ is labeled by the paired label, $(j_1,j_2)$. If $D^h$ contains a vertex $x\in P_{j_1,j_2}^h$, then we remove the paired label $(j_1,j_2)$ from $x$ and then relabel $x$ arbitrarily by one of the singleton labels $j_1$ or $j_2$. This completes Labeling $2$.

\medskip

After Labeling $2$, a vertex $v$ of $D$ may have a paired label $(i,j)$ if $v\in P^h_{i,j}$ and for any $h'\in N_H(h)$, $D^{h'}\cap P^{h'}_{i,j}=\emptyset$.

\medskip

\noindent\emph{Labeling 3}:

For every $h\in H$, if $D^h$ contains vertices $x$ and $y$ both with paired labels $(j_1,j_2)$, for some integers $j_1,j_2,$, then we relabel $x$ by the label $j_1$ and $y$ by the label $j_2$. For every $h\in H$, if $D^h$ contains vertices $x$ and $y$ with paired labels $(j_1,j_2)$ and $(j_2,j_3)$ respectively, for some integers $j_1,j_2,$ and $j_3$, then we relabel $y$ by the label $j_3$. If $x$ and $y$ are labeled $j_1$ and $(j_1,j_2)$ respectively, for some integers $j_1,j_2$, we relabel $y$ by $j_2$. We apply this relabeling to pairs of vertices of $D^h$, sequentially, in any order.

\medskip

%
%

For $h\in H$, let $S_1^h$ be the vertices of $D^h$ which still have a pair of labels. Notice that after Labeling $3$, $S_1^h$ is contained in $\C_{I}$. For each vertex in $S_1^h$, we place each component of the paired label on that vertex in the set $J^h_1$. For example, if $S_1^h$ contains vertices with labels $(i_1,i_2)$ and $(i_3,i_4)$, then $J^h_1=\{i_1,i_2,i_3,i_4\}$.

Define the index set $I^h_1=[k] - I^h=\{i_{\ell+1}, \dots, i_{k}\}$ for vertically dominated cells of $G^h$. 





The following observations follow from the definition of claw-free:

\begin{enumerate}
\item For $j_1,j_2\in [k]-I^h$, no vertex of $D\cap P^h_{j_1,j_2}$ may dominate any of $v_{i_1}^h,\dots,v_{i_{\ell}}^h$. Thus, $\{v_{i_1}^h,\dots, v_{i_\ell}^h\}$ must be dominated horizontally in $G^h$ by shared neighbors of $\{v_i^h:i\in I^h\}$ from the chamber of $Q_{I^h}^h$.
\item If $j_1,j_2,j_3,j_4$ are distinct elements of $[k]$ and $x\in P^h_{j_1,j_2}, y\in P^h_{j_3,j_4}$, then $x$ is not adjacent to $y$.
\item Similarly, $x\in P^h_{j_1}$ is not adjacent to any $y\in P^h_{j_2,j_3}$.
\item By $(2)$, all vertices of $D^h-\C_{J_1^h}$ which are adjacent to some vertex of $\C_{J_1^h}$ must be members of $P^h_{i}$ for $i\in I^h_1$.
\item If a vertex of $\C_{J_1^h}$ is vertically undominated and dominated from outside $\C_{J_1^h}$, then it must be a member of $P^h_j$ for some $j\in J^h_1$, since neither shared neighbors of $\C_{J_1^h}$, nor $v_j^h$ for $j\in J_1^h$, can be adjacent to vertices outside $\C_{J_1^h}$.
\end{enumerate}

Observations $(1)-(5)$ imply the following:

\begin{claim}\label{horizdom}
If $v$ is a vertically undominated vertex of $\C_{J_1^h}$ which is not dominated by a shared neighbor in $\C_{J_1^h}$, then it is a private neighbor in $\C_{J_1^h}$. Furthermore, $v$ must be dominated by a private neighbor of $\C_{I_1^h}$.
\end{claim}


Suppose every vertex of $D$ is labeled by a single label. For any $i$, $1\leq i \leq k$, projecting all vertices labeled by $i$ onto $H$ produces a dominating set of $H$. Summing over all $i$ we count at least $\gamma(G)\gamma(H)$ vertices of $D$.

If for some $h\in V(H)$, some vertex $v\in D^h$ is labeled by a paired label, then $v\in \C_{J_1^h}$ and for some $i,j\in J_1^h$, $v\in P_{i,j}^h$. Since labelings $2$ and $3$ have been performed, $v$ must be the only vertex in $P_{i,j}^h$, else $i,j$ would not be in $J_1^h$. Furthermore, if $v$ cannot dominate both $Q_i^h$ and $Q_j^h$, since this would produce a dominating set of $G$ with size less that $\gamma(G)$. Thus, some private neighbor $p$ of $v_i^h$ or $v_j^h$ must be independent from $v$. Without loss of generality, suppose $p\in P_j$. By Claim \ref{horizdom}, $p$ must be horizontally dominated by some vertex $q$ which is a private neighbor in $\C_{I_1^h}$. That is, for some $m\in I_1^h$, $q\in P_m$. However, this produces $P_6: v_ivv_jpqv_m$, which is a contradiction.

\end{proof}

\section{Vizing-type inequalites in terms of diameter}

We now review some significant ideas from \cite{BR}, which generalize the seminal work of Bartsalkin and German \cite{BG}. 

\begin{defn}\label{FR}
For pairwise disjoint sets of vertices $S_1, \dots, S_k$ from a graph $G$, with $\Sf=S_1\cup\dots\cup S_k$ and $Z=V(G)-\Sf$, we say $S_1, \dots, S_k$ form a \emph{fair reception of size $k$} if the following condition holds:

For any integer $\ell$, $1\leq \ell \leq k$, and any choice of $\ell$ sets $S_{i_1}, \dots, S_{i_{\ell}}$, if $D$ externally dominates $S_{i_1}\cup \dots \cup S_{i_{\ell}}$, then
\[\left|D\cap Z\right| + \sum_{j,S_j\cap D\neq \emptyset}\left(\left|S_j \cap D\right|-1\right)\geq \ell\]
\end{defn}

For any graph $G$, the largest $k$ such that there exists a fair reception of size $k$ in $G$ is called the \emph{fair domination number of $G$} and is denoted by $\gamma_F(G)$.

\begin{thm}[Bre\v{s}ar and Rall \cite{BR}]\label{fair}
\[\gamma(G \square H) \geq \max \{\gamma(G)\gamma_F(H),\gamma_F(G)\gamma(H)\} \]
\end{thm}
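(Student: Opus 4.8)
The plan is to prove the single inequality $\gamma(G\square H)\ge \gamma_F(G)\gamma(H)$; since $G\square H\cong H\square G$ and $\gamma$ is an isomorphism invariant, interchanging the roles of $G$ and $H$ then gives $\gamma(G\square H)\ge \gamma_F(H)\gamma(G)$ as well, and the two inequalities together yield the stated maximum. So I would fix a fair reception $S_1,\dots,S_k$ of $G$ realizing $k=\gamma_F(G)$, write $Z=V(G)-\bigcup_i S_i$, and let $D$ be a minimum dominating set of $G\square H$. For each $h\in V(H)$ abbreviate $D^h=\{g\in V(G):(g,h)\in D\}$, viewed as a subset of $V(G)$, and for each $i\in[k]$ set $C_i=\{h\in V(H):(g,h)\in D\text{ for some }g\in S_i\}$, the projection onto $H$ of the part of $D$ lying in the ``column'' $S_i\times V(H)$.

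The first step is a bookkeeping identity. Counting $D$ fiber by fiber and separating, within each fiber, the contribution of $Z$ from that of the $S_i$, one obtains
\[ |D|=\sum_{h\in V(H)}\Big(|D^h\cap Z|+\sum_{i:\,S_i\cap D^h\neq\emptyset}\big(|S_i\cap D^h|-1\big)\Big)+\sum_{i=1}^{k}|C_i|, \]
since each fiber contributes its ``excess'' plus one for every $S_i$ it meets, and the number of fibers meeting $S_i$ is exactly $|C_i|$. Writing $\mathrm{cost}(h)$ for the inner parenthesized quantity, note that $\mathrm{cost}(h)$ is precisely the left-hand side appearing in Definition \ref{FR} when the generic set there is taken to be $D^h$.

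The heart of the argument is to bound $\mathrm{cost}(h)$ from below. For fixed $h$, let $X_h=\{i:h\notin N_H[C_i]\}$. For $i\in X_h$ no vertex of $S_i\times\{h\}$ lies in $D$ and none is vertically dominated, so each such vertex must be dominated horizontally, by a vertex of $D^h$ that lies outside $S_i$; in fact it lies outside $\bigcup_{j\in X_h}S_j$, because $D^h$ misses every one of those sets. Hence $D^h$ externally dominates $\bigcup_{i\in X_h}S_i$, and the fair-reception property gives $\mathrm{cost}(h)\ge|X_h|$. Summing over $h$ and interchanging the order of summation converts $\sum_h|X_h|$ into $\sum_i\big(|V(H)|-|N_H[C_i]|\big)$, so the identity above yields $|D|\ge\sum_{i=1}^k\big(|V(H)|-|N_H[C_i]|+|C_i|\big)$.

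It then remains to check that each summand is at least $\gamma(H)$, and here I would use that $C_i$ together with the set $R_i=V(H)-N_H[C_i]$ of vertices it fails to dominate is a dominating set of $H$: vertices of $N_H[C_i]$ are dominated by $C_i$ and the remaining ones dominate themselves. Thus
\[ \gamma(H)\le|C_i\cup R_i|\le|C_i|+\big(|V(H)|-|N_H[C_i]|\big), \]
which is exactly the $i$-th summand, giving $|D|\ge k\,\gamma(H)=\gamma_F(G)\gamma(H)$. I expect the main obstacle to be the middle step: correctly isolating, for each fiber $h$, the family $X_h$ of sets forced to be dominated entirely from within that fiber, and verifying that $D^h$ dominates their union \emph{externally} in the strict sense demanded by Definition \ref{FR} (the dominator must avoid the whole union, not merely the individual $S_i$). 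The opening identity and the closing ``$C_i\cup R_i$ dominates $H$'' observation are routine once the indexing is in place, but matching the horizontal-domination structure to the external-domination hypothesis is where the care lies.
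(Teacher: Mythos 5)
Your proof is correct. Note that the paper itself offers no proof of Theorem \ref{fair}: it is quoted as a known result of Bre\v{s}ar and Rall \cite{BR}, so there is no in-paper argument to compare against. What you have written is essentially a faithful reconstruction of the original argument in \cite{BR}: the symmetry reduction to $\gamma(G\square H)\ge\gamma_F(G)\gamma(H)$, the fiber-by-fiber accounting identity separating the ``excess'' cost of each fiber from the projections $C_i$, the identification, for each $h$, of the indices $i$ with $h\notin N_H[C_i]$ whose sets must be dominated horizontally and hence externally by $D^h$ (your care on this point is exactly right --- since $D^h$ misses \emph{all} of $\bigcup_{j\in X_h}S_j$, the external-domination hypothesis of Definition \ref{FR} holds for the union, not just for each $S_i$ separately), and the closing observation that $C_i\cup\bigl(V(H)-N_H[C_i]\bigr)$ dominates $H$. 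The only detail worth adding is the trivial case $X_h=\emptyset$, where $\mathrm{cost}(h)\ge 0$ holds because every term in it is nonnegative; Definition \ref{FR} is only invoked when $|X_h|\ge 1$.
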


\medskip

The distance between two vertices $u$ and $v$ is the number of edges in a shortest path between them. For any vertex $v$, the eccentricity of $v$, $\varepsilon(v)$, is the greatest distance from $v$ to any other vertex. The diameter of a graph $G$, $d(G)$, can now be defined as
\[d(G)=\max_{v\in V(G)}\varepsilon(v).\]

\subsection{Graphs with large diameter}

It is easy to see that graphs with large diameter admit a large fair reception.

\begin{prop}\label{diam}
$\gamma(G\square H)\geq \left(\lfloor\frac{d(G)}{3}\rfloor+1\right)\gamma(H)$
\end{prop}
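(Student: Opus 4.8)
The plan is to route everything through the fair domination number. Theorem~\ref{fair} gives $\gamma(G\square H)\geq \gamma_F(G)\gamma(H)$, so it suffices to prove the purely structural estimate $\gamma_F(G)\geq \lfloor d(G)/3\rfloor+1$; that is, I would exhibit a fair reception of $G$ of that size. Since $G$ is connected, the diameter is realized by a shortest path, and I would extract the reception from the vertices of such a path spaced three apart.

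Concretely, I would fix a diametral geodesic $w_0 w_1\cdots w_d$ with $d=d(G)$ and select every third vertex, setting $x_i=w_{3(i-1)}$ for $1\leq i\leq k$, where $k=\lfloor d/3\rfloor+1$; then $3(k-1)=3\lfloor d/3\rfloor\leq d$, so each $x_i$ lies on the path. Because every subpath of a geodesic is itself a geodesic, $d_G(x_i,x_j)=3|i-j|\geq 3$ for $i\neq j$. I would then take the singletons $S_i=\{x_i\}$, with $\Sf=\{x_1,\dots,x_k\}$ and $Z=V(G)-\Sf$, and check Definition~\ref{FR} against this family.

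The verification simplifies because the sets are singletons: whenever $S_j\cap D\neq\emptyset$ we have $|S_j\cap D|-1=0$, so the correction sum in Definition~\ref{FR} vanishes and the condition reduces to showing $|D\cap Z|\geq \ell$ whenever $D$ externally dominates $\ell$ of the chosen singletons. Fix such $x_{i_1},\dots,x_{i_\ell}$; for each $t$, external domination supplies a vertex $d_t\in D$ distinct from $x_{i_t}$ with $x_{i_t}\in N[d_t]$, so $d_t$ is a genuine neighbor of $x_{i_t}$. The distance-$3$ spacing now does all the work: if $d_t=d_{t'}$ for $t\neq t'$ then $x_{i_t}$ and $x_{i_{t'}}$ would share a neighbor and lie at distance at most $2$, and if some $d_t$ lay in $\Sf$ it would be a selected vertex adjacent to $x_{i_t}$, forcing distance at most $1$; both contradict $d_G(x_i,x_j)\geq 3$. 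Hence $d_1,\dots,d_\ell$ are $\ell$ distinct vertices of $D\cap Z$, giving $|D\cap Z|\geq \ell$.

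Therefore $S_1,\dots,S_k$ form a fair reception, so $\gamma_F(G)\geq k=\lfloor d(G)/3\rfloor+1$, and Theorem~\ref{fair} yields
\[
\gamma(G\square H)\geq \gamma_F(G)\gamma(H)\geq \left(\left\lfloor\tfrac{d(G)}{3}\right\rfloor+1\right)\gamma(H).
\]
The delicate point—and the step I would write most carefully—is exactly the claim that the external dominators $d_t$ are pairwise distinct \emph{and} forced into $Z$. This is where the constant $3$ is essential rather than $2$: two selected vertices at distance $2$ could be externally dominated by a single common neighbor, which would collapse the count and destroy the fair reception property.
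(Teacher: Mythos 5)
Your proof is correct, and it reaches the bound through a genuinely different fair reception than the paper's. Both arguments route through Theorem~\ref{fair} by exhibiting a fair reception of size $k=\lfloor d(G)/3\rfloor+1$ built along a diametral path, but the decompositions are different. The paper performs a BFS layering from an endpoint $a$ of a diametral path, letting $V_i$ be the $i$th level set, and then \emph{partitions the entire vertex set} into blocks of (roughly) three consecutive levels, with case analysis on $d \bmod 3$ and $Z=\emptyset$; the fair-reception inequality then holds essentially vacuously, because every block contains a level whose vertices have no neighbors outside the block (neighbors of a level-$m$ vertex lie in levels $m-1,m,m+1$), so no block can be externally dominated by any set $D$ at all. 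You instead take \emph{singleton} sets $S_i=\{x_i\}$ spaced three apart on the geodesic and throw everything else into $Z$, which makes the correction sum $\sum_j(|S_j\cap D|-1)$ vanish identically and forces a non-vacuous verification: your distinctness argument (a common external dominator would put two selected vertices at distance $\leq 2$, and a dominator inside $\Sf$ would put two at distance $\leq 1$) is exactly what the distance-$3$ spacing guarantees. What each approach buys: yours avoids the threefold case analysis on $d\bmod 3$ and needs only the geodesic itself rather than the full layering, at the cost of actually checking the inequality against arbitrary $D$; the paper's partition makes the defining condition of Definition~\ref{FR} trivial to check but requires the congruence bookkeeping to make the blocks come out right. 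Both correctly yield $\gamma_F(G)\geq \lfloor d(G)/3\rfloor+1$ and hence the stated bound.
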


\begin{proof}
Let $P$ be an induced path in $G$ of length $d=d(G)$ with end vertex $a$. Define the \emph{$i^{th}$ level set} of $G$, $V_i$, for $0\leq i\leq d$, as the set of vertices of $G$ of length $i$ from $a$. We produce a fair reception of the required size depending on the congruence class of $d$. 

\medskip

If $d\equiv 0 \pmod 3$, let $k=\frac{d}{3}+1$. We partition the vertices of $G$ by the level sets to form a fair reception. This is done so that one set of the fair reception contains the first and second level sets and another set of the fair reception contains the next to last and the last level set. The rest of the sets of the fair reception each contain three consecutive level sets. That is, define $S_1=V_0\cup V_1$ and $S_k=V_{k-1}\cup V_k$. For $j=2+3i$ and $0\leq i \leq \frac{d}{3}-2$, let $S_{i+2}=V_j\cup V_{j+1}\cup V_{j+2}$. We note that no set $S_i$ can be externally dominated from any other set $S_j$ for $i\neq j$ and $1\leq i,j\leq k$. Thus, $\Sf=\{S_1,\dots,S_{k}\}$ and $Z=\emptyset$ form a fair reception.

\medskip

If $d\equiv 1 \pmod 3$, let $k=\frac{d+2}{3}$. We partition the vertices of $G$ by level sets for a fair reception so that one set of the fair reception contains the first and second level sets and the rest of the sets of the fair reception each contain three consecutive level sets. That is, define $S_1=V_0\cup V_1$. For $j=2+3i$ and $0\leq i \leq \frac{d}{3}-1$, let $S_{i+2}=V_j\cup V_{j+1}\cup V_{j+2}$. We note that no set $S_i$ can be externally dominated from any other set $S_j$ for $i\neq j$ and $1\leq i,j\leq k$. Thus, $\Sf=\{S_1,\dots,S_{k}\}$ and $Z=\emptyset$ form a fair reception.

\medskip

If $d\equiv 2 \pmod 3$, let $k=\frac{d+1}{3}$. We partition the vertices of $G$ by level sets for a fair reception so that the sets of the fair reception each contain three consecutive level sets. That is, for $j=3i-3$ and $1\leq i \leq k$, let $S_i=V_j\cup V_{j+1}\cup V_{j+2}$. We note that no set $S_i$ can be externally dominated from any other set $S_j$ for $i\neq j$ and $1\leq i,j\leq k$. Thus, $\Sf=\{S_1,\dots,S_{k}\}$ and $Z=\emptyset$ form a fair reception.

\medskip

Notice that $k=\lfloor\frac{d(G)}{3}\rfloor+1$ and $\gamma_F(G)\geq k$. Hence, by Theorem \ref{fair}, we produce the proposed inequality.
\end{proof}

We note that this bound is an improvement over \eqref{suentarr} for graphs $G$ such that $d(G)>\frac{3}{2}\gamma(G)$.

\section{Discussion}

We would like to note that apart from the study of Vizing-type inequalities for graphs with forbidden subgraphs, the results of this paper can be viewed, in part, as an investigation of Vizing-type bounds for graphs with different fixed diameters. Proposition \ref{cograph} shows us that Vizing's inequality holds for graphs with diameter $2$. For graphs with higher diameter, we could only make approximate statements. For diameter $3$ we have Proposition \ref{clique and P5} which is a Vizing-type statement that further relies on the exclusion of certain cliques. For diameter $4$, Theorem \ref{claw-free} guarantees Vizing's bound but only for claw-free graphs. Thus, even for graphs of small diameters, Vizing's conjecture is far from resolved. As the diameter gets large, Proposition \ref{diam} starts becoming more relevant, but we have no bounds exceeding \eqref{suentarr} until $d(G)>\frac{3}{2}\gamma(G)$.

\section{Acknowledgements}
We would like to thank Douglas Rall for the helpful comments.

 \bibliographystyle{plain}
 
 \end{document}